    \newtheorem{Lem}{Lemma}[section]
    \newtheorem{Lem-Def}{Lemma-Definition}[section]
    \newtheorem{Prop}[Lem]{Proposition}
    \newtheorem*{thm}{Theorem}
    \newtheorem{Thm}[Lem]{Theorem}
    \newtheorem{Cor}[Lem]{Corollary}
\theoremstyle{definition}
    \newtheorem{Rem}[Lem]{Remark}
\newcommand{\Sym}{ \text{Sym}^{24}}
\newcommand{\co}{\mathbb{C}}
\newcommand{\F}{\mathcal F}
\newcommand{\IP}{\mathcal P}
\newcommand{\D}{\mathcal D}
\newcommand{\col}{\colon}
\newcommand{\Ps}{\mathbb{P}}
\newcommand{\ra}{\rightarrow}
\newcommand{\bpd}{{\mathbb{P}^2}^\vee}
\newcommand{\fer}{F}
\newcommand{\kk}{K}
\newcommand{\pgl}{{\mathbb{P}GL}}
\address{Marco Pacini, Universidade Federal Fluminense, Rua M. S. Braga, Niter\'oi (RJ) Brazil}
\email{pacini@impa.br, pacini@vm.uff.br}
\address{Damiano Testa, Mathematics Institute, University of Warwick, Coventry, CV4 7AL, United Kingdom}
\email{adomani@gmail.com}
\begin{document}

\title[Plane quartics with at least 8 hyperinflection points]{Plane quartics with at least 8 hyperinflection points}

\author[Marco Pacini and Damiano Testa]{Marco Pacini and Damiano Testa}

\begin{abstract}
A recent result shows that a general smooth plane quartic can be recovered from its 24 inflection lines and a single inflection point. Nevertheless, the question whether or not a smooth plane curve of degree at least 4 is determined by its inflection lines is still open. Over a field of characteristic 0, we show that it is possible to reconstruct any smooth plane quartic with at least 8 hyperinflection points by its inflection lines. Our methods apply also in positive characteristic, where we show a similar result, with two exceptions in characteristic 13.
%
%
\end{abstract}

\thanks{The first author was partially supported by CNPq, processo 300714/2010-6.}

\maketitle

\section*{Introduction}

Many questions on plane curves ask for a reconstruction procedure in terms of linear data.  For instance, Caporaso and Sernesi  in~\cite{CS} and Lehavi in~\cite{L} answer the question of whether the configuration of bitangent lines to a smooth plane quartic determines the plane quartic itself.  Similarly, over finite fields, Bogomolov, Korotiaev and Tschinkel in~\cite{BKT}, address the question of determining curves from their ``inflection data''.  More recently, we studied in~\cite{PT} the reconstruction problem for plane curves from their inflection lines: we showed that every smooth plane cubic over a field of characteristic different from 2, can be recovered from its 9 inflection lines, and that a general smooth plane quartic over a field of characteristic 0 can be recovered from its 24 inflection lines and a single inflection point.  Nevertheless, the question whether or not a smooth plane curve of degree at least 4 is determined by its inflection lines is still open. In characteristic 0, we show in this paper that it is possible to reconstruct any smooth plane quartic with at least 8 hyperinflection points by its inflection lines.  In work in progress~\cite{APT} with Abreu, we use the results of this paper to show that a general plane quartic can be reconstructed from the configuration of its inflection lines.

In this paper we reconstruct the smooth plane quartics contained in an explicit family that we call {\em{Vermeulen's list}} (see~\eqref{verli}).  

\begin{thm}
If $C_1,C_2$ are quartics in Vermeulen's list with the same configuration of inflection lines, then $C_1$ and $C_2$ coincide, with the following exceptions in characteristic 13:
\begin{itemize}
\item if $C_1$ is isomorphic to the plane quartic with equation
\[
\begin{array}{rcl}
x^4+y^4+z^4+3 (x^2y^2+x^2z^2+y^2z^2) & = & 0, 
\end{array}
\]
 then there are exactly 3 possibilities for $C_2$;
\item if $C_1$ is isomorphic to the plane quartic with equation
\[
\begin{array}{rcl}
x^4 - y^4 + z^4 + 2 x^2 y^2 + 4 x y z^2 & = & 0,
\end{array}
\]
 then there are exactly 2 possibilities for $C_2$.
\end{itemize}
In all cases the curves $C_1$ and $C_2$ are isomorphic.
\end{thm}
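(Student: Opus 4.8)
The plan is to reduce the statement to an explicit, finite computation carried out separately on each entry of Vermeulen's list \eqref{verli}. Each entry is a quartic $F=F_t$ depending on one or more parameters $t$, and it comes with the subgroup $G\subset\pgl_3$ of projective transformations preserving the family; two parameter values give projectively equivalent curves exactly when they lie in one $G$-orbit. The first task is, for a fixed entry, to write down the configuration of inflection lines as an explicit function of $t$. Since the inflection points of $C=\{F_t=0\}$ are cut out on $C$ by the Hessian $\cH(F_t)$, I would analyse the intersection $C\cap\{\cH(F_t)=0\}$, single out its hyperinflection points (those where $\cH(F_t)$ vanishes to higher order), and at each inflection point record its tangent line. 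This produces the configuration $\Lambda(t)\subset\bpd$ of inflection lines, with hyperinflection lines appearing with the appropriate multiplicity, as an explicit cycle whose data are polynomial in $t$.

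The second task is to recover $t$ (up to $G$) from the unordered configuration $\Lambda(t)$ alone. Here the hypothesis of at least $8$ hyperinflection points is essential, since it forces a rigid combinatorial structure: the hyperinflection lines form a distinguished, large sub-configuration of $\Lambda(t)$, and their incidences --- which triples are concurrent, which intersection points are collinear, and the cross-ratios of the resulting pencils and ranges --- yield $G$-invariant rational functions of $t$. The central claim to prove is that these invariants separate $G$-orbits, that is, that the map sending the parameter $t$ (modulo $G$) to the projective-equivalence class of $\Lambda(t)$ is injective. Granting this, two curves $C_1,C_2$ from the list sharing the same configuration have $G$-equivalent parameters, and comparing the actual configurations, not merely their classes, pins the curves down to coincide.

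The decisive and most delicate step is to verify this separation over a field of arbitrary characteristic, and it is here that I expect the main obstacle. After normalising away the $G$-action, the condition $\Lambda(t_1)=\Lambda(t_2)$ becomes a polynomial system in $(t_1,t_2)$ whose solution locus I would compute; in characteristic $0$ it is reduced and equals the diagonal, giving $C_1=C_2$. The separating invariants are quotients of polynomials whose injectivity can fail precisely when the characteristic divides certain resultants or discriminants arising in the elimination. Tracking these exceptional primes through the computation, I expect the only genuine failure to occur in characteristic $13$, and only for the two quartics listed. For each of those curves I would then compute the fibre of the configuration map directly, exhibit the extra $3$ (resp.\ $2$) parameter values producing the same inflection lines, and finally write down explicit elements of $\pgl_3$ carrying one member of the fibre to another, thereby confirming the closing assertion that the curves in each exceptional fibre, though distinct, are isomorphic.
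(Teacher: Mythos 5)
Your reduction handles only half of the problem, and not the half where the content (and the characteristic-$13$ exceptions) actually lives. Splitting the statement as you do, there are two things to prove: (i) non-isomorphic curves in the list have projectively inequivalent configurations, and (ii) a curve is determined by its configuration \emph{within} its own $\pgl_3(k)$-orbit. Your separating-invariants argument addresses (i), but you dispose of (ii) with the phrase ``comparing the actual configurations, not merely their classes, pins the curves down to coincide'' --- and that is exactly the step that requires proof. If $C_2=\sigma(C_1)$ with $\sigma\in\pgl_3(k)$ and the two configurations literally coincide, all you can conclude is that $\sigma$ stabilizes the configuration of $C_1$; to get $C_1=C_2$ you need to know that every linear transformation stabilizing that configuration is an automorphism of $C_1$. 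The paper isolates this in Proposition~\ref{recon-orbit} (using Lemma~\ref{aut-id} to see that $\aut(C)$ injects into the linear stabilizer of the configuration) and then devotes essentially the whole proof of Theorem~\ref{aut-iso} to computing that stabilizer curve by curve: via the cubic $xyz$ for $\fer$; via the three conics $D_1,D_2,D_3$, an anticanonical argument on a blow-up, and a $j$-invariant computation for $\kk$; and via the pencil $\mathcal{Q}$ and the group $\overline{D}$ for $C_\pm$, $V$ and $V_u$. None of this is present, even in outline, in your plan.

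Relatedly, your diagnosis of the exceptions is off. They do not arise because two distinct parameter values $t_1\ne t_2$ give projectively equivalent configurations; they arise because in characteristic $13$ the stabilizer of a single configuration is strictly larger than $\aut(C)$ (index $3$ for $\kk$, index $2$ for $V_{-1}$), so the extra curves in each exceptional fibre are images of $C_1$ under stabilizing transformations that are not automorphisms. For instance, the curves $\kk_2,\kk_3$ of Remark~\ref{Kecce} are not members of any of the parametric families in standard form but translates of $\kk$ by $\gamma_{13}$; your plan to ``exhibit the extra parameter values producing the same inflection lines'' would therefore not find them. To repair the proposal you would need to add, for each entry of the list, a computation of the full linear stabilizer of the configuration in $\pgl_3(k)$ together with a comparison with $\aut(C)$; once that is in place, your part (i) --- including the cross-family comparisons such as $\fer$ versus $\kk$ and $V$ versus $V_u$, which your entry-by-entry setup also leaves implicit --- completes the argument along essentially the same lines as the paper.
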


Vermeulen showed in~\cite{V} that over a field of characteristic 0 the quartics in Vermeulen's list are exactly the smooth plane quartics with at least 8 hyperinflection points.  Combining Vermeulen's result with our reconstruction procedure, we deduce that, over a field of characteristic 0, all smooth plane quartics with at least 8 hyperinflection points are determined uniquely by the configuration of their inflection lines.  It follows from Vermeulen's classification that every smooth plane quartic with at least 8 hyperinflection points has non-trivial automorphism group: the presence of non-trivial automorphisms is an important ingredient in our reconstruction result.

Our methods work with very minor restrictions on the characteristic of the ground field and in fact allow us to determine exceptions to our main statement over fields of positive characteristic.  A curious and unexpected consequence of our work is that all the exceptions that we found are defined over fields of characteristic~$13$: we do not have an explanation of the special nature of this characteristic.

We want to make one final comment on the characteristics $2$ and $3$.  We always exclude fields of characteristic $2$ since all our models are singular in this case.  We exclude fields of characteristic $3$ since we believe that a smooth plane quartic over a field of characteristic $3$ has either inseparable Gauss map (for instance, the curves $\fer,\kk,V$ of Vermeulen's list) or all inflection points of multiplicity at least 3 (for instance, the general curve of the family $V_u$ of Vermeulen's list): in the first case, the knowledge of the inflection lines is equivalent to the knowledge of the dual curve and hence of the curve itself; in the second case, we found a one-parameter family of curves that are not reconstructed by the configuration of their inflection lines.  We do not analyze the case of characteristic $3$ further, even though we expect that our techniques combined with the methods developed by Hefez and Kleiman in~\cite{HK} should be applicable in this context.

As a consequence of our method, we obtain a description of the automorphism groups of the curves in Vermeulen's list valid in all admissible characteristics.  These groups coincide with the groups in characteristic~0 that can be found for instance in Dolgachev's book~\cite{Do}.

Cumino, Esteves and Gatto give an intrinsic description of families and limit of hyperinflection points of curves in~\cite{CEG}; their terminology differs from ours since they call {\emph{special Weierstrass points}} what we call {\emph{hyperinflection points}}.

\subsection*{Acknowledgments} The authors wish to thank the organizers of the 12th ALGA meeting for the inspiring conference where this collaboration started. The authors also want to thank the organizers of ColGA for their warm and kind hospitality, and IMPA and the mathematical departments of UFF and UFRJ for the stimulating environments. 



\section{Inflection lines of smooth plane quartics}

Let $k$ be an algebraically closed field of any characteristic.  We determine a sufficient condition to reconstruct a smooth plane curve within its $\pgl_3(k)$-orbit via its inflection lines. Before, we prove an easy result on automorphisms of $\Ps^2$.

\begin{Lem}\label{aut-id}
If an automorphism of $\Ps^2$ fixes at least 6 inflection points of a smooth plane quartic, then it is the identity.
\end{Lem}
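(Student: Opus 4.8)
The plan is to realise the automorphism as an element $\sigma \in \pgl_3(k)$, lift it to a matrix $A \in \mathrm{GL}_3(k)$, and read off its fixed locus in $\Ps^2$ from the eigenspace decomposition of $A$. The only geometric input needed about the quartic is that, being an irreducible curve of degree $4$, it meets any line in at most $4$ points by Bézout; in particular at most $4$ of its inflection points can be collinear. The strategy is then to verify that no non-identity element of $\pgl_3(k)$ has a fixed locus large enough to accommodate $6$ inflection points.

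First I would classify the fixed locus of a non-identity $\sigma = [A]$ according to the Jordan form of $A$ (taken up to scalar). The fixed points of $\sigma$ are exactly the classes $[v]$ of eigenvectors $v$ of $A$, so the fixed locus is the union of the projectivised eigenspaces. Running through the possibilities, a non-identity $\sigma$ has fixed locus equal to one of the following: three isolated points, when $A$ has three distinct eigenvalues; a single isolated point, when $A$ has one eigenvalue and a single Jordan block of size three; two isolated points, when $A$ has two distinct eigenvalues with the repeated one carrying a non-trivial Jordan block; a single line, when $A$ has one eigenvalue of Jordan type $2+1$; or a line together with one isolated point off it, when $A$ is diagonalisable with a repeated eigenvalue. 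The remaining possibility $A = \lambda I$ is precisely the identity and is excluded.

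Then I would match each case against the hypothesis that $\sigma$ fixes at least $6$ inflection points $p_1, \dots, p_6$. When the fixed locus is finite it consists of at most $3$ points, so it cannot contain $6$ distinct points. When the fixed locus is a single line $L$, all six points lie on $L$, contradicting the fact that a line meets the quartic in at most $4$ points. The decisive case is the one in which the fixed locus is a line $L$ together with one isolated point $q$: then at least $5$ of the $p_i$ lie on $L$, again contradicting the bound of $4$ collinear inflection points. In every case we reach a contradiction, so $\sigma$ must be the identity.

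The main obstacle, and the reason the threshold is exactly $6$ rather than a smaller number, is this last configuration: a non-identity homology can fix an entire line together with one extra point, so it can pin down as many as $4 + 1 = 5$ inflection points without being trivial. It is precisely the passage from $5$ to $6$ forced inflection points that defeats the line-plus-point case through the degree-$4$ collinearity bound, and this is what makes the count in the statement sharp.
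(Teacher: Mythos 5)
Your argument is correct, and it takes a genuinely different route from the paper's. You classify the full fixed locus of a non-identity element of $\pgl_3(k)$ by the Jordan type of a lift: three, two, or one isolated points, a line, or a line plus an isolated point; you then rule out each case using only the fact that a line meets a smooth quartic in at most $4$ points, so at most $4$ inflection points are collinear. The paper instead starts from the same collinearity bound to extract $3$ of the $6$ fixed points in general position, normalizes them to the coordinate points so that $\sigma$ becomes diagonal, and then splits into cases according to whether the remaining fixed points lie on the coordinate triangle; in the degenerate case it observes that two of the coordinate lines must each carry at least $3$ fixed points, hence are fixed pointwise, forcing the diagonal entries to be equal. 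Both proofs hinge on exactly the same geometric input, but your classification makes the sharpness of the threshold transparent (a homology fixes a line plus a point and so can pin down $5$ inflection points without being trivial), whereas the paper's normalization argument is more elementary in that it never needs the Jordan-form taxonomy, at the cost of a slightly fiddlier final case analysis. One cosmetic point: you should note explicitly that a line cannot be a component of a smooth quartic (otherwise the quartic would be reducible, hence singular), which is what legitimizes the Bézout bound of $4$; the paper takes this for granted as well.
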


\begin{proof}
Let $C\subset\Ps^2$ be a smooth plane quartic and let $\IP\subset\Ps^2$ be a set of inflection points of $C$ of cardinality~$6$. Assume that $\sigma\in \pgl_3(k)$ is an automorphism of $\Ps^2$ fixing the points of $\IP$. Since $C$ intersects a line of $\Ps^2$ in a set of cardinality at most 4 and the cardinality of $\IP$ is 6, it follows that $\IP$ is not contained in a line of $\Ps^2$.  In particular, there are 3 points $P_1,P_2,P_3$ of $\IP$ that are  in general position. Without loss of generality,  we can assume that they are the points $P_1=[1,0,0]$, $P_2=[0,1,0]$, $P_3=[0,0,1]$. Since $\sigma$ fixes $P_1,P_2,P_3$,  it follows that $\sigma$ is induced by a diagonal matrix  $diag(\lambda_1,\lambda_2,\lambda_3)$, where $\lambda_1,\lambda_2,\lambda_3\in k^\times$. Let $\ell_{ij}$ be the line of $\Ps^2$ containing $P_i$ and $P_j$, for $i,j$ distinct indices in $\{1,2,3\}$.  If $\IP$ is not contained in the union $\ell_{12}\cup \ell_{13}\cup \ell_{23}$, then $\sigma$ fixes $4$ points in general position, and hence it is the identity. On the other hand, if $\IP$ is contained in $\ell_{12}\cup \ell_{13}\cup \ell_{23}$, then there are two of the $\ell_{ij}$'s containing each one at least 3 points of $\IP$, and hence the two lines are fixed pointwise by $\sigma$. 
 It follows that $\lambda_1=\lambda_2=\lambda_3$, and hence $\sigma$ is the identity.
\end{proof}

\begin{Prop}\label{recon-orbit}
Let $C$ be a smooth plane quartic with at least $6$ inflection points.  The automorphism group of $C$ injects in the group of linear automorphisms of the configuration of its inflection lines. Furthermore, if this injection is an isomorphism and $C'$ is a plane quartic isomorphic to $C$ but different from $C$, then the configurations of inflection lines of $C$ and $C'$ are different.
\end{Prop}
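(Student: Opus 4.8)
The plan is to exploit the classical fact that a smooth plane quartic is canonically embedded, so that both its automorphisms and any isomorphism onto another smooth plane quartic are induced by elements of $\pgl_3(k)$. Write $\L$ for the configuration of inflection lines of $C$ and let $G\subset\pgl_3(k)$ be the group of linear automorphisms of $\Ps^2$ preserving $\L$. First I would observe that any $\sigma\in\aut(C)$ lies in $\pgl_3(k)$ and preserves the intrinsic set of inflection points of $C$; since $\sigma$ carries each tangent line of $C$ to a tangent line, it preserves $\L$, so $\sigma\in G$. This produces the natural homomorphism $\aut(C)\to G$ whose injectivity is the content of the first assertion.

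To prove injectivity I would show faithfulness on the lines. Suppose $\sigma\in\aut(C)$ fixes every inflection line. Each inflection line $\ell$ meets $C$ in a divisor whose component of maximal multiplicity is exactly the inflection point $P$ of $\ell$ (multiplicity $3$ at an ordinary inflection point, with one residual simple point, and multiplicity $4$ at a hyperinflection point); a second point of multiplicity $\geq 3$ is impossible on a quartic, so $P$ is the unique point of $\ell\cap C$ of multiplicity greater than one. As $\sigma$ preserves $C$, fixes $\ell$, and respects intersection multiplicities, it must fix $P$. Hence $\sigma$ fixes all inflection points of $C$, and since there are at least $6$ of them, Lemma~\ref{aut-id} forces $\sigma=\mathrm{id}$.

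For the second assertion, assume the injection is an isomorphism, that is $\aut(C)=G$, and let $C'$ be a plane quartic isomorphic to $C$ with $C'\neq C$. Suppose for contradiction that $C$ and $C'$ have the same configuration $\L$ of inflection lines. Because both curves are canonically embedded, the isomorphism $C\to C'$ is realized by some $\phi\in\pgl_3(k)$ with $\phi(C)=C'$. Then $\phi$ sends the inflection lines of $C$ to those of $C'$, so $\phi(\L)=\L$ and therefore $\phi\in G=\aut(C)$; this forces $\phi(C)=C$, contradicting $\phi(C)=C'\neq C$. Hence the configurations of inflection lines of $C$ and $C'$ must differ.

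The only genuine external input is Lemma~\ref{aut-id}; the remaining steps are bookkeeping built on the linearity supplied by the canonical embedding. The delicate point I expect to dwell on is the reduction ``fixes every inflection line $\Rightarrow$ fixes the inflection point on it'', which relies on distinguishing the contact point by its intersection multiplicity and on the numerical impossibility of two points of multiplicity $\geq 3$ on a line meeting a quartic. I would also emphasize that the hypothesis ``the injection is an isomorphism''—namely that $\aut(C)$ exhausts \emph{all} linear symmetries of $\L$—is precisely what converts the isomorphism $\phi$ into an automorphism of $C$; verifying this equality $\aut(C)=G$ for the specific curves of Vermeulen's list is where the real labor of the paper will lie.
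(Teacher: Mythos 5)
Your proposal is correct and follows essentially the same route as the paper: build the natural homomorphism from $\aut(C)$ to the linear automorphisms of the configuration, reduce injectivity to Lemma~\ref{aut-id} via the observation that fixing each inflection line forces fixing its inflection point, and obtain the second assertion by noting that a linear map carrying $C$ to $C'\neq C$ with the same configuration would be a symmetry of the configuration not coming from $\aut(C)$. The only difference is that you spell out, via intersection multiplicities, why fixing the lines fixes the points — a step the paper asserts without justification — so your write-up is if anything slightly more complete.
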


\begin{proof}
Let $G_1$ be the automorphism group of $C$ and $G_2 \subset \pgl_3 (k)$ be the group of linear automorphisms of the configuration of inflection lines of $C$. There is a natural group homomorphism  $\varphi \col G_1 \ra G_2$ defined as follows.  If $\tau\in G_1$ is induced by an automorphism  $\sigma\in \pgl_3(k)$, then $\sigma$ permutes the inflection lines of $C$ and we set $\varphi(\tau) = \sigma $. Notice that if $\varphi(\tau)=id$ and $\tau$ is induced by  $\sigma\in \pgl_3(k)$, then $\sigma$ fixes the inflection lines of $C$, and hence also the inflection points of $C$.  It follows from Lemma \ref{aut-id} that $\sigma$, and hence $\tau$, is the identity, implying that $\varphi$ is an injection.

Let us show the second sentence of the lemma.  Assume that the plane quartic $C'$ is isomorphic to $C$ and different from $C$. Write $C\ne C'=\sigma(C)$, for some $id\ne \sigma\in  \pgl_3(k)$. In particular, $\sigma|_{C}$ is not an automorphism of $C$. On the other hand, if the configurations of inflection lines of $C$ and $C'$ are equal, then $\sigma$ induces a non-trivial linear automorphism of the configuration of inflection lines of $C$. It follows that the homomorphism $\varphi\col G_1\ra G_2$ is not surjective, and we are done.
\end{proof}

\begin{Rem}
In Theorem~\ref{aut-iso} we show that, over fields of characteristic different from $2$ and $3$, the injection constructed in Proposition~\ref{recon-orbit} is an isomorphism for any smooth plane quartic in Vermeulen's list~\eqref{verli} with two exceptions in characteristic~$13$ (see Remarks~\ref{Kecce} and~\ref{eccezioni}).  If the characteristic of the field $k$ is 0, for all the smooth plane quartics we considered, the group of projective automorphisms of the configuration of inflection lines coincides with the automorphism group of the curve.
%
Nevertheless, there exist singular plane quartics for which the injection is not an isomorphism. For example, let $C$ be an irreducible plane quartic with exactly 3 cusps. Up to an isomorphism, the equation of the curve $C$ is 
\[
\begin{array}{lll}
C \colon \quad x^2y^2+x^2z^2+y^2z^2-2xyz(x+y+z) &  = & 0
\end{array}
\]
(see for example~\cite{BG}). Let $\F_C\in \Sym(\bpd)$ be the cycle consisting of the 3 cuspidal tangents of $C$ each one appearing with multiplicity 8.  The map 
\[
\F\colon \Ps^{14}\dashrightarrow \Sym(\bpd)
\]
sending a smooth plane quartic to its cycle of inflection lines extends over $C$ and sends $C$ to the cycle $\F_C$ (see~\cite[Propositions~2.5 and~2.10]{PT}). Thus, we can regard $\F_C$ as the configuration of inflection lines of $C$. Let $G_1$ and $G_2$ be respectively the automorphism group of $C$ and the group of linear automorphism of the cuspidal tangents of $C$. The group $G_1$ is finite (and it is isomorphic to the symmetric group on 3 elements, acting as permutation of the coordinates). The cuspidal tangents of $C$ are the lines $x=y$, $x=z$ and $y=z$, and hence  the group $G_2$ is isomorphic to $\co^*\times \co^*$. The natural group homomorphism $\varphi\col G_1\ra G_2$ described in the proof of Lemma~\ref{recon-orbit} is an injection, but not an isomorphism, since $G_1$ is finite, while $G_2$ is infinite. In particular, there are automorphism of $\Ps^2$ fixing the configuration $\F_C$ and not fixing $C$. Thus, although the map $\F$ is generically injective (see~\cite[Lemma~3.2]{PT}), the fiber $\F$ passing through $C$ is positive dimensional. 
\end{Rem}

\section{Quartics with at least 8 hyperinflection points}


We begin this section with a list of smooth plane quartics that we call {\em{Vermeulen's list}}.  In Theorem~\ref{aut-iso} we show that the configuration of inflection lines of the quartics in Vermeulen's list uniquely characterizes the quartics within the list.  It follows 
from~\cite{V} that, over a field of characteristic 0, Theorem~\ref{aut-iso} is enough to reconstruct the quartics in Vermeulen's list among all smooth plane quartics (Corollary~\ref{main}).



\subsubsection*{Vermeulen's list}
We call {\em{Vermeulen's list}} the family of smooth plane quartics isomorphic to the curves with equations 
\begin{equation} \label{verli}
\left\{
\begin{array}{lrcl}
\fer \colon & x^4+y^4+z^4 & = & 0 , 
\\[7pt]
\kk \colon & x^4+y^4+z^4+3 (x^2y^2+x^2z^2+y^2z^2) & = & 0 , 
\\[7pt]
C_\pm \colon & (21 \pm 8 \sqrt{7}) z^4 - 6 (2 \pm \sqrt{7}) x y z^2 + (3 \pm \sqrt{7}) (x^3+y^3) z - 3 x^2 y^2 & = & 0, 
\\[7pt]
V \colon & x^4 - 7 y^4 - z^4 - 42 x^2 y^2 + 12 x y z^2 & = & 0 , 
\\[7pt]
V_u \colon & u x^4 + y^4 - z^4 - 2 x^2 y^2 - 4 x y z^2 & = & 0 , 
\end{array}
\right.
\end{equation}
where $\sqrt{7}$ is fixed square root of $7$ in $k$ and $u \in k$ is such that the curve $V_u$ is smooth.  All these equations define curves that are singular if the characteristic of the field $k$ is $2$: we always exclude this possibility.  More precisely, these are the conditions for smoothness of the curves in Vermeulen's list.
\begin{itemize}
\item 
The curve $\fer$ is smooth if and only if the characteristic of $k$ is different from $2$.
\item 
The curve $\kk$ is smooth if and only if the characteristic of $k$ is different from $2$ or $5$.
\item 
The curves $C_\pm$ are smooth if and only if the characteristic of $k$ is different from $2$, $3$ or $7$.
\item 
The curve $V$ is smooth if and only if the characteristic of $k$ is different from $2$ or $7$.
\item 
The curve $V_0$ is singular with the unique singular point $[1,0,0]$; the curve $V_1$ is the union of the two lines with equation $z = \pm(x - y)$ 
and of the singular conic with equation $(x+y)^2 + z^2 = 0$.  For all values of $u$ different from $0,1$, the curve $V_u$ is smooth, if the characteristic of $k$ is different from $2$.
\end{itemize}
In our arguments, we always make the implicit assumption that the curves we are treating are non-singular: this is reflected by a restriction on either the characteristic or the specific member of the family.  In particular, from now on, we assume that the characteristic of the algebraically closed field $k$ is different from $2$.  Moreover, we exclude fields of characteristic $3$; in this case, we found a one-parameter family of counterexamples, outlined in Remark~\ref{eccezioni}.



The curve $V$ is obtained from the curve 
\[
8(x^4 + y^4) + 48 \sqrt{-7} x^2 y^2 + 24 (1-\sqrt{-7}) x y z^2 - (7-3\sqrt{-7}) z^4 = 0
\]
in~\cite[p.~1576]{G} by evaluating the coordinates in $x , \sqrt[4]{-7} y , \frac{2 z}{\sqrt[8]{-7} \sqrt{1-\sqrt{-7}}}$.  Similarly, performing the change 
\[
[x ,y,z] \longmapsto \left[ \frac{x}{t + 1} \; , \; \frac{x - y + z}{2} \; , \; \frac{x - y - z}{2t} \right]
\]
in the coordinates of~\cite[Theorem~4.1]{G}, transforms the family 
\[
(t^2+1) (x^2-y z)^2 = y z (2 x-y-z) (2 t x-y-t^2 z)
\]
to the family 
\[
-\frac{1}{8t} \left( \left( \frac{t - 1}{t + 1} \right)^4 x^4 + y^4 - z^4 - 2 x^2 y^2 - 4 x y z^2 \right) = 0 .
\]
Setting $s = \frac{t - 1}{t + 1}$ and $u = s^4$ we obtain the family of plane quartics $V_u$ appearing in Vermeulen's list.

In~\cite{KK}, Kuribayashi and Komiya proved that a smooth plane quartic with exactly 12 hyperinflection points over a field of characteristic 0 is isomorphic either to the Fermat curve $\fer$ or to the curve $\kk$.

In~\cite{V}, Vermeulen showed that over a field of characteristic 0, up to isomorphism, there are no smooth plane quartics with 10 and 11 hyperinflection points, that the two curves $C_\pm$ are the only quartics with exactly $9$ hyperinflection points (\cite[p.~143, 20.12]{V} or~\cite[p.~1564,~\S3]{G}), and that the curves $V$ and $V_u$ are the only quartics with exactly $8$ hyperinflection points.

Hence, over a field of characteristic 0, Vermeulen's list is the list of all smooth plane quartics with at least $8$ hyperinflection points.

Recall that in this section we always assume that the characteristic of the field $k$ is different from~$2$ and~$3$.

In the proof of Theorem~\ref{aut-iso} we start with a quartic in $\mathbb{P}^2$ and with the corresponding configuration of inflection lines in $\bpd$.  We usually choose coordinates on $\bpd$ dual to the coordinates in $\mathbb{P}^2$, thus identifying $\bpd$ and $\mathbb{P}^2$; we denote the coordinates on $\mathbb{P}^2$ and on $\bpd$ with the same symbols $x,y,z$, since no confusion arises.

\begin{Thm}\label{aut-iso}
If $C_1,C_2$ are quartics in Vermeulen's list with the same configuration of inflection lines, then $C_1$ and $C_2$ coincide, with the following exceptions in characteristic 13:
\begin{itemize}
\item if $C_1$ is isomorphic to $K$, then there are exactly 3 possibilities for $C_2$;
\item if $C_1$ is isomorphic to $V_{-1}$, then there are exactly 2 possibilities for $C_2$.
\end{itemize}
In any case, the curves $C_1$ and $C_2$ are isomorphic.
\end{Thm}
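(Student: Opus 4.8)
The plan is to apply Proposition~\ref{recon-orbit} in two stages. Every curve in~\eqref{verli} is a smooth plane quartic, hence a canonically embedded non-hyperelliptic genus~$3$ curve, so two members of the list are abstractly isomorphic exactly when they lie in the same $\pgl_3(k)$-orbit; moreover each has at least $8$ hyperinflection, hence at least $6$ inflection, points, so Proposition~\ref{recon-orbit} applies throughout. The first stage separates the $\pgl_3(k)$-orbits occurring in the list by their configurations of inflection lines, showing that two members with the same configuration are isomorphic. The second stage shows that, within a single orbit, the injection $\varphi\col \aut(C)\hookrightarrow \aut(\mathcal F_C)$ of Proposition~\ref{recon-orbit} is an isomorphism, so that $C$ is recovered from $\mathcal F_C$; the failure of surjectivity in characteristic $13$ is precisely what produces the listed exceptions.

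The coarsest separation tool is the multiplicity profile of the cycle of inflection lines. A smooth plane quartic carries $24$ inflection points counted with multiplicity, so a curve with $h$ hyperinflection and $i$ ordinary inflection points satisfies $2h+i=24$, and in the cycle of inflection lines the tangent at a hyperinflection point appears with multiplicity $2$ while that at an ordinary inflection point appears with multiplicity $1$. This alone splits the list into three groups according to $(h,i)$: $\{\fer,\kk\}$ with twelve double lines, $\{C_\pm\}$ with nine double and six simple lines, and $\{V,V_u\}$ with eight double and eight simple lines. To complete the separation I would compute the inflection lines explicitly for each family---the inflection points are cut on $C$ by its Hessian, and the symmetry of the equations in~\eqref{verli} keeps these computations manageable---and then read off projective invariants of the resulting line arrangements, distinguishing $\fer$ from $\kk$, distinguishing $C_+$ from $C_-$, and, most importantly, distinguishing the members of the family $V_u$ for varying $u$.

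For the second stage I would determine, curve by curve, both $\aut(C)$ and the group $\aut(\mathcal F_C)$ of linear automorphisms of the configuration, and verify that $\varphi$ is surjective. Away from characteristic $13$ this equality holds for every member of the list, and Proposition~\ref{recon-orbit} then forces $C_1=C_2$. In characteristic $13$ the configurations of $\kk$ and of $V_{-1}$ acquire extra linear symmetries, so that the image of $\varphi$ has index $3$, respectively $2$; the non-trivial cosets carry $C$ to the remaining projectively inequivalent---but abstractly isomorphic---curves sharing its configuration, which accounts for exactly the $3$ and $2$ possibilities in the statement. Pinning down that these are the only coincidences, and that characteristic $13$ is the only bad prime, is where the work concentrates.

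The main obstacle is the one-parameter family $V_u$. Unlike the rigid curves $\fer,\kk,C_\pm,V$, here one must show that the parameter $u$ is reconstructible from the configuration of inflection lines, that is, that the assignment $u\mapsto \mathcal F_{V_u}$ is injective up to the finite ambiguity recorded in the theorem. I expect this to reduce to an explicit analysis of how the inflection lines of $V_u$ vary with $u$, together with a careful determination of the characteristic, namely $13$ at $u=-1$, at which two distinct parameter values produce the same arrangement.
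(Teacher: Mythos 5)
Your two-stage strategy is exactly the paper's: split the list by the number of hyperinflection lines, show that non-isomorphic members have non-conjugate configurations, and show that for each member the injection of Proposition~\ref{recon-orbit} is surjective, with the characteristic-$13$ failures of surjectivity for $\kk$ and $V_{-1}$ accounting for the listed exceptions. So the route is right, and the $6$-inflection-point hypothesis is indeed satisfied throughout. The difficulty is that everything your plan defers \emph{is} the content of the theorem, and for at least one curve the deferred step needs an idea that ``compute and compare'' does not supply. The hard direction is always the upper bound on the group of linear automorphisms of the configuration: a priori this is the stabilizer in $\pgl_3(k)$ of a set of points of $\bpd$, and one needs a structural reason why it is small and computable. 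For $\fer$ the paper shows the ideal of the configuration is $(xyz,\,x^4+y^4+z^4)$, forcing the group into the monomial subgroup; for $C_\pm$ and for $V,V_u$ it exhibits invariant conics (the pencil $\mathcal Q$) that force the group into the explicit group $\overline{D}$, after which the check is finite. For $\kk$ the needed input is genuinely non-obvious: the twelve inflection lines lie on three conics whose strict transforms form a bianticanonical divisor on the blow-up of $\bpd$ at the configuration, every linear automorphism must permute these conics, and the residual stabilizer is governed by the automorphisms of an auxiliary elliptic curve of $j$-invariant $35152/9$ --- which is exactly where the exceptional characteristics ($7$, which turns out to be harmless, and $13$, which does not) come from. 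Nothing in your outline would let you certify that $13$ is the \emph{only} bad prime for $\kk$ without this or an equivalent device.

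The same issue touches your stage one for the family $V_u$: to see that $u$ is determined by the configuration, the paper shows that any $T\in\pgl_3(k)$ carrying one configuration to another must preserve the conic through the hyperinflection lines and the base points of $\mathcal Q$, hence lies in $\overline{D}$, hence fixes every conic of the pencil, which forces $27u+5=27v+5$ and so $u=v$; your ``explicit analysis of how the inflection lines vary with $u$'' needs to be sharpened to an argument of this kind. Two smaller points: you assert without justification that the counts of hyperinflection and simple inflection lines (and the relation $2h+i=24$) persist in positive characteristic, whereas the paper flags this as something to be verified along the way; and the multiplicity profile alone does not separate $\fer$ from $\kk$ or $V$ from $V_u$, for which the paper uses the geometric invariants that the twelve lines of $\fer$ lie on three lines while those of $\kk$ do not, and that the eight hyperinflection lines of $V_u$ lie on a conic while those of $V$ do not.
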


To prove Theorem~\ref{aut-iso} we argue separately for the curves in Vermeulen's list having exactly $12$, $9$ or $8$ hyperinflection lines: this is justified since in the course of our reasoning we shall see that the number of hyperinflection lines of the smooth curves in Vermeulen's list is independent of the characteristic.  Our strategy, then, consists of two steps.  
First, the automorphism group of a quartic in Vermeulen's list is isomorphic to the linear automorphism group of its configuration of inflection lines: applying Proposition~\ref{recon-orbit} we deduce that if two quartics in Vermeulen's list have the same configuration of inflection lines, then they are not isomorphic.  Second, the configurations of inflection lines of two non-isomorphic plane quartics in Vermeulen's list are not conjugate by an element of $\pgl_3(k)$: using the first step, the reconstruction result follows.

\subsection*{Proof of Theorem~\ref{aut-iso} for the quartics $\fer$ and $\kk$}

The inflection points of the curve $\fer$ are the points obtained from $[0,1,\varepsilon]$ by permuting the coordinates and where $\varepsilon$ ranges among the primitive roots of unity of order 8; the inflection line corresponding to the inflection point $[0,1,\varepsilon]$ is the line with coordinates $[0,1,\varepsilon^3]$.  Let $\F \subset \bpd$ denote the reduced set of points corresponding to the inflection lines of the curve $\fer$.  The two polynomials $x y z$ and $x^4+y^4+z^4$ are coprime and vanish on $\F$.  It follows that $V(x y z , x^4+y^4+z^4)$ is a scheme of dimension zero and degree twelve containing $\F$, so that these two schemes coincide.  In particular, the ideal of $\F$ is generated by $x y z$ and $x^4+y^4+z^4$.  The polynomial $x y z$ is, up to scaling, the unique polynomial of degree 3 vanishing on $\F$.  We obtain that the linear automorphism group of $\F$ must be a subgroup of the linear automorphism of $V (x y z)$, and it is therefore contained in the group $G$ of matrices in $\pgl_3(k)$ with exactly one non-zero entry in each row and column.  Note that any element of the group $G$ transforms $x^4+y^4+z^4$ into a polynomial of degree 4 whose monomials are fourth powers, and that $x^4+y^4+z^4$ is, up to scaling, the unique polynomial of degree 4 vanishing on $\F$ whose monomials are fourth powers.  We deduce that the group of linear automorphisms of $\F$ injects in the automorphism group of $V(x^4+y^4+z^4)$ and hence it follows from Proposition \ref{recon-orbit} that the automorphism group of $\fer$ is isomorphic to the linear automorphism group of its configuration of inflection lines.

Denote by $G$ the group of permutations and sign changes of the coordinates of $\mathbb{P}^2$; the group $G$ is isomorphic to the symmetric group $\mathfrak{S}_4$ and we shall see that it coincides with the automorphism group of the curve $\kk$.  The twelve inflection points of the curve $\kk$ form the orbit of the point $[1,1,i]$ under the group $G$.  The coordinates of the inflection lines to the curve $\kk$ form the orbit of the point $[1,1,2i]$ under the group $G$; denote by $\F {\subset \bpd}$ the set of the twelve inflection lines of $\kk$.  Let $\widetilde{G}$ be the group of linear automorphisms of $\bpd$ stabilizing the set $\F$; by abuse of notation, we denote by $G$ also the image of the group $G$ acting on $\bpd$, so that the group $\widetilde{G}$ contains $G$.  We show that the group $\widetilde{G}$ coincides with the group $G$, unless the characteristic of the ground field is $13$: in this case, the index of the group $G$ in $\widetilde{G}$ is $3$.

Denote by $X$ the blow up of $\bpd$ at the points in $\F$: the surface $X$ is a smooth projective rational surface.  Each of the three conics 
\[
\begin{array}{l@{\quad}rcl}
D_1 \colon & 3x^2+y^2+z^2 & = & 0 \\[5pt]
D_2 \colon & x^2+3y^2+z^2 & = & 0 \\[5pt]
D_3 \colon & x^2+y^2+3z^2 & = & 0
\end{array}
\]
contains 8 of the points in $\F$, and each point of $\F$ appears in exactly two of the conics $D_1,D_2,D_3$.  It follows that the union $D \subset X$ of the strict transforms of the three conics $D_1,D_2,D_3$ in $X$ is an effective representative of twice the anticanonical divisor on $X$.  We deduce that the divisor $D$ is smooth and its irreducible components have square ${-4}$; in particular, all the effective sections of the positive multiples of the anticanonical linear system have support contained in $D$.  Therefore, every element of the group $\widetilde{G}$ must also stabilize the union $D_1 \cup D_2 \cup D_3$, and hence induces a permutation of the set $\{D_1,D_2,D_3\}$.  It is immediate to check that the group $G$ acts simply transitively on the set of pairs $(p,C)$, where $p$ is an inflection line of $\kk$ and $C$ is one of the two conics $D_1,D_2,D_3$ containing $p$.  Denote by $G' \subset \widetilde{G}$ the subgroup of $\widetilde{G}$ stabilizing the pair $([1,1,2i],D_1)$.  The index of the group $\widetilde{G}$ in $G$ is $24$, and the group $G$ intersects $\widetilde{G}$ trivially.  To conclude it suffices to compute the group $G'$.  Denote by $G''$ the group of automorphisms of $\bpd$ stabilizing the conic $D_1$ fixing the point $[1,1,2i]$ and stabilizing the intersection $D_1 \cap D_2$; the group $G'$ is a subgroup of $G''$.  Let $E$ be the elliptic curve obtained as a double cover of $D_1$ branched over the intersection $D_1 \cap D_2$ with origin lying above the point $[1,1,2i]$; the group $G''$ is the quotient by the elliptic involution of the automorphism group of the elliptic curve $E$.  Hence, the group of $G''$ is trivial unless $j(E)$ is $1728$ or $0$ and a direct calculation shows that the $j$-invariant of the elliptic curve $E$ is $35152/9$.  Thus, the group $G''$ is trivial unless 
\begin{itemize}
\item 
the characteristic of $k$ is $7$, in which case the $j$-invariant of $E$ is $1728$ and $G''$ has order $2$, or 
\item 
the characteristic of $k$ is $13$, in which case the $j$-invariant of $E$ vanishes and $G''$ has order $3$, 
\end{itemize}
recall that we always exclude the cases in which the characteristic of $k$ is $2$ or $3$ and we also exclude the case of fields of characteristic $5$, since the equation of the curve $\kk$ modulo $5$ defines a singular quartic.

In the case of characteristic $7$, the involution 
\[
\begin{array}{rcl}
\gamma_7 \colon \quad \mathbb{P}^2 & \longrightarrow & \mathbb{P}^2 \\[5pt]
[x,y,z] & \longmapsto & [\frac{1}{2}z , iy , -2x] 
\end{array}
\]
is the unique non-trivial element of $G''$.  Since $\gamma_7$ does not stabilize the set of inflection lines of $\kk$, we deduce that the group $G'$ is trivial and therefore the groups $G$ and $\widetilde{G}$ coincide.

In the case of characteristic $13$, the linear transformation 
\[
\begin{array}{rcl}
\gamma_{13} \colon \quad \mathbb{P}^2 & \longrightarrow & \mathbb{P}^2 \\[5pt]
[x,y,z] & \longmapsto & [z , 3 x , \frac{1}{3} y] 
\end{array}
\]
is a generator of the group $G''$.  In this case, the map $\gamma_{13}$ is an element of the group $\widetilde{G}$ and it is not an automorphism of the curve $\kk$; we conclude that the group $G'$ coincides with the group $G''$ and therefore the group $G$ has index $3$ in the group $\widetilde{G}$.  Thus, with the exception of fields of characteristic $13$, the automorphism group of $\kk$ is isomorphic to the linear automorphism group of its configuration of inflection lines.


Finally, we can distinguish the configurations of inflection lines of the curves $\fer$ and $\kk$ by observing that the inflection lines of the curve $\fer$ are contained in 3 lines, while the inflection lines of the curve $\kk$ are not: the proof of Theorem \ref{aut-iso} is complete for the curves $\fer$ and $\kk$.
\qed

\begin{Rem}\label{Kecce}
Over a field of characteristic 13, the three curves 
\[
\begin{array}{lrcl}
\kk_1 \colon & x^4+y^4+z^4+3 (x^2 y^2+x^2 z^2+y^2 z^2) & = & 0 \\[5pt]
\kk_2 \colon & x^4+3y^4+9z^4+3 (9x^2 y^2+3x^2 z^2+y^2 z^2) & = & 0 \\[5pt]
\kk_3 \colon & x^4+9y^4+3z^4+3 (3x^2 y^2+9x^2 z^2+y^2 z^2) & = & 0
\end{array}
\]
have the same configuration of inflection lines.  The curves themselves are projectively equivalent: the automorphism $\gamma_{13} \colon [x,y,z] \mapsto [z,3x,\frac{1}{3}y]$ of $\mathbb{P}^2$ permutes the curves $\kk_1,\kk_2,\kk_3$ cyclically.
\end{Rem}

\subsection*{The group $\overline{D}$ and the pencil $\mathcal{Q}$}

We introduce now a subgroup $\overline{D}$ of $\pgl_3(k)$ that we will use often in the reconstruction arguments of this section.  The subgroup $\overline{D} \subset \pgl_3(k)$ is the group stabilizing the conic with equation $xy + \alpha z^2 = 0$, for some $\alpha \in k^\times$, and the point $[0,0,1]$.  The group $\overline{D}$ is the group consisting of the linear transformations 
\begin{equation}\label{Dbar}
\overline{D} = \left\{ \left.
\underbrace{\rho_\alpha \colon [x,y,z] \longmapsto \left[ \alpha x , \alpha^{-1} y , z \right]}_{\textup{diagonal}} 
\quad \quad {\textup{and}} \quad \quad 
\underbrace{\sigma_\alpha \colon [x,y,z] \longmapsto \left[ \alpha y , \alpha^{-1} x , z \right]}_{\textup{anti-diagonal}} 
\hspace{10pt} \right| \hspace{10pt} 
\alpha  \in k^\times \right\} ;
\end{equation}
we call {\em{diagonal}} the elements of $\overline{D}$ of the first kind and {\em{anti-diagonal}} the elements of the second kind.  Let 
\begin{equation} \label{penq}
\mathcal{Q} = \left\{ \lambda xy + \mu z^2 = 0 ~\mid~ [\lambda , \mu ] \in \mathbb{P}^1 \right\}
\end{equation}
denote the pencil of conics generated by the conics $xy=0$ and $z^2=0$.  The group $\overline{D}$ can be equivalently described as the group stabilizing two distinct conics in the pencil $\mathcal{Q}$.  Note that every subgroup of $\overline{D}$ either consists entirely of diagonal elements, or it is generated by its diagonal elements and a single anti-diagonal element.

\subsection*{Proof of Theorem~\ref{aut-iso} for the quartics $C_\pm$}


Let $\zeta$ denote a root of unity of order three.  Observe that the subgroup 
$G$ of $\pgl_3(k)$ generated by the linear automorphisms 
\[
[x,y,z] \longmapsto [\zeta x , \zeta^{-1} y , z]
\quad \quad {\textup{and}} \quad \quad 
[x,y,z] \longmapsto [y , x , z]
\]
is isomorphic to the symmetric group $\mathfrak{S}_3$ and is contained in the automorphism group of the two curves $C_\pm$.  Observe that every conic in the pencil~$\mathcal{Q}$ of~\eqref{penq} is invariant under $G$, and hence, every orbit of $G$ is contained in a $G$-invariant conic in the pencil $\mathcal{Q}$.

The group $G$ acts on the two quartics $C_{\pm}$ and hence also on the configuration of their inflection lines.  We begin by showing that the automorphism group $G_+$ of the configuration of inflection lines of the curve $C_+$ coincides with the group $G$; of course everything we say applies equally well to $C_-$.  We already observed that the group $G_+$ contains the group $G$.  The curve $C_+$ has exactly six simple inflection lines, and they correspond to the points in the intersection 
\[
\left\{
\begin{array}{rcl}
4 \sqrt{7} xy + (12 + 5 \sqrt{7})  z^2 & = & 0 \\[5pt]
28 (x^3 + y^3) + (218 + 45 \sqrt{7}) z^3 & = & 0 ;
\end{array}
\right.
\]
in particular, any element of the group $G_+$ induces a permutation of the six simple inflection lines of $C_+$ and is uniquely determined by the resulting permutation.  Moreover, since the group $G_+$ preserves the conic containing the six simple inflection lines, we deduce that the only element of $G_+$ fixing at least three simple inflection lines is the identity.  Thus the group $G_+$ is isomorphic to a subgroup of the symmetric group $\mathfrak{S}_6$, it contains the group $G$ isomorphic to $\mathfrak{S}_3$, and its order is not divisible $9$, since otherwise $G_+$ would contain a $3$-Sylow subgroup of $\mathfrak{S}_6$ and hence also a $3$-cycle.  The $9$ hyperinflection lines of $C_+$ decompose into two orbits under group $G$, one corresponding to the $6$ points defined by the equations 
\[
\left\{
\begin{array}{rcl}
xy + (8 + 3 \sqrt{7}) z^2 & = & 0 , \\[5pt]
x^3 + y^3 - (25 + 9 \sqrt{7}) z^3 & = & 0
\end{array}
\right.
\]
and the other one corresponding to the three points $[1,1,-1] , [\zeta , \zeta^{-1} , -1] , [\zeta^{-1} , \zeta , -1]$.  If the group $G_+$ did not preserve this $6+3$ decomposition of the hyperinflection lines, then it would act transitively on the nine hyperinflection lines and its order would be divisible by $9$, contrary to what we showed before.  We deduce that the group $G_+$ stabilizes also the conic with equation $xy + (8 + 3 \sqrt{7}) z^2 = 0$ and hence it stabilizes the pencil $\mathcal{Q}$ of~\eqref{penq}.  It follows that $G_+$ is a subgroup of the group $\overline{D}$ of~\eqref{Dbar}, and since it stabilizes the set $\{ [1,1,-1] , [\zeta , \zeta^{-1} , -1] , [\zeta^{-1} , \zeta , -1] \}$ we conclude that $G_+$ coincides with the group $G$, as required.

To finish the proof of Theorem  \ref{aut-iso} for quartics with 9 hyperinflection points, it remains to show that the configuration of inflection lines of the quartics $C_\pm$ are not conjugate.  From the previous argument we deduce that the automorphism groups of the configuration of inflection lines of the two quartics $C_+$ and $C_-$ coincide with the group $G$, so that any linear map transforming the configuration of inflection lines of $C_+$ into the configuration of inflection lines of $C_-$ must normalize the group $G$.  An easy computation shows that the normalizer $N (G)$ of the group $G$ in $\pgl_3(k)$ is the group generated by $G$ itself and the projective matrices of the form $diag(1,1,\lambda)$ for $\lambda \in k^\times$.  Since the only elements in $N(G)$ actually stabilizing the configuration of inflection lines of the curve $C_+$ (or the curve $C_-$) are the elements of $G$, we conclude that the two configurations are not projectively equivalent and the proof is complete.
\qed

\subsection*{Proof of Theorem~\ref{aut-iso} for the quartics $V$ and $V_u$}

Let $i$ denote a square root of $-1$ and recall that the parameter $s$ satisfies the identity $s^4 = u$; observe that the linear transformations 
\[
\rho_i = \rho \colon [x,y,z] \longmapsto [ix , -iy , z] \quad \quad {\textup{and}} \quad \quad \sigma_s \colon [x,y,z] \longmapsto \left[ s^{-1}y , sx , z \right]
\]
induce automorphisms of the curve $V_{s^4}$ and generate a group $D_{4,u}$ isomorphic to the dihedral group ${\rm D}_4$.  Let $\alpha \in k$ denote an element satisfying the identity $\alpha^4 = -7$; in~\S\ref{D13V} and~\S\ref{D13} we show that the groups $D_{4,\alpha}$ and $D_{4,u}$ are the full automorphism groups of the curve~$V$ and of the smooth curves in the family~$V_u$.


As a consequence of the classification of the quartics $\mathcal C$ with 8 hyperinflection points, it follows that  the automorphism group $\mathcal D$ of $\mathcal C$ is isomorphic to the dihedral group ${\rm D}_4$. 
An easy computation shows that the two reducible conics with equation $y^2 = \pm s^2 x^2$, where $s^4 = u$ if $\mathcal{C}$ is the curve $V_u$ and $s = \alpha$ if $\mathcal{C}$ is the curve $V$, and every conic of the pencil $\mathcal Q$ generated by $xy$ and $z^2$ are all the invariant conics under the action of the group $\D$. It follows that every orbit in $\Ps^2$ or $\bpd$ under the action of $\mathcal D$ is contained in a conic of $\mathcal Q$.  In particular, this applies to the orbits of the action of $\mathcal D$  on the set of inflection points or inflection lines of the curves $\mathcal C$. This justifies why so many conics appear in our computations.

\subsubsection{The quartic $V\colon x^4 - 7 y^4 - z^4 - 42 x^2 y^2 + 12 x y z^2 = 0$}\label{D13V}

It is easy to show that the hyperinflection lines of $V$ are defined by the equations
\[
\left\{
\begin{array}{rrcl}
C_1\col & (7+\sqrt{-7})z^2 & =& 4xy \\ 
& y^2 & = & \sqrt{-7}x^2
\end{array}
\right. 
\quad \quad \text{ and } \quad \quad 
\left\{
\begin{array}{rrcc}
C_2\col & (7-\sqrt{-7})z^2 & = & 4xy \\ 
& y^2 & = & -\sqrt{-7}x^2.
\end{array}
\right.
\]

 The quartics 
\[
q = \bigl( (7+\sqrt{-7})z^2 - 4xy \bigr) \bigl( y^2 + \sqrt{-7}x^2 \bigr) 
\quad \quad {\textup{and}} \quad \quad
\overline{q} = \bigl( (7-\sqrt{-7})z^2 - 4xy \bigr) \bigl( y^2 - \sqrt{-7}x^2 \bigr)
\]
vanish on the $8$ hyperinflection lines and therefore also the reducible quartics 
\[
- \frac{(7 - \sqrt{-7}) q + (7 + \sqrt{-7}) \overline{q}}{56} = y (x^3+xy^2-2yz^2)
\quad {\textup{and}} 
\quad 
- \frac{(7 - \sqrt{-7}) q - (7 + \sqrt{-7}) \overline{q}}{8\sqrt{-7}} = x (7x^2y-14xz^2-y^3)
\]
vanish on the hyperinflection lines.  Since the lines $x=0$ and $y=0$ contain no hyperinflection line, it follows that the $8$ hyperinflection lines correspond to $8$ of the base points of the pencil of cubics generated by $x^3+xy^2-2yz^2 = 0$ and $7x^2y-14xz^2-y^3 = 0$.  The point $[0,0,1]$ is a base point of the pencil, but does not correspond to a hyperinflection line.  Moreover, since any cubic vanishing on the $8$ hyperinflection lines, must also vanish on the point $[0,0,1]$, we deduce that the ideal of homogeneous polynomials vanishing on the configuration of hyperinflection lines vanishes on $[0,0,1]$ and therefore every automorphism of the configuration must preserve the point $[0,0,1]$.  The configuration of simple inflection lines is defined by the equations 
\[
x y + z^2 = 0 
\quad \quad {\textup{and}} \quad \quad 
7 x^4 + 362 x^2 y^2 - y^4 = 0 .
\]
We obtain that the automorphism group of the inflection lines must preserve the conic with equation ${xy+z^2 = 0}$ and the point $[0,0,1]$: it is contained in the group $\overline{D}$ of~\eqref{Dbar}.  In particular, the group of automorphisms of the configuration stabilizes each conic in the pencil $\mathcal{Q}$ and the configuration of hyperinflection lines: it must therefore stabilize the intersection of the conic $C_1$ with the configuration of hyperinflection lines.  It is now immediate to check that the subgroup of $\overline{D}$ with these properties is the group $D_{4,\alpha}$, as required. We conclude that the automorphism group of the quartic $V$ is isomorphic to the linear automorphism group of its configuration of inflection lines.

We note that there are no conics containing the 8 hyperinflection points of $V$. Indeed, since the 8 hyperinflection lines are stable under the action of the group $D_{4,\alpha}$, if there were a conic $C$ containing the 8 hyperinflection lines of $V$, then the conic would be invariant under the action of this group and it would be in the pencil $\mathcal{Q}$. Since the conics $C_1$ and $C_2$ are invariant and contain each 4 hyperinflection lines of $V$, the conic $C$ would be equal to both $C_1$ and $C_2$, which is a contradiction.

\subsubsection{The quartics in the family $V_u \colon u x^4 + y^4 - z^4 - 2 x^2 z^2 - 4 x y^2 z = 0$}\label{D13}

In the family $V_u$, the curve $V_0$ is singular and therefore we exclude the value $0$ for $u$.  A straightforward computation shows that the hyperinflection points of the curves $V_u$ lie on the two lines with equations $x=0$ and $y=0$ and that the coordinates of the hyperinflection lines satisfy the equations 
\begin{equation} \label{vergii}
\left\{
\begin{array}{rrcl}
C_h \colon & z^2 & = & - x y \\[5pt]
Q_h \colon & (u + 1) z^4 & = & x^4 + u y^4 .
\end{array}
\right.
\end{equation}
Similarly, we find that the simple inflection points of the curves $V_u$ lie on the conic $2 x y + 3 z^2 = 0$, and that the coordinates of the corresponding inflection lines satisfy the equations 
\begin{equation} \label{vergis}
\left\{
\begin{array}{rrcl}
C_s \colon & (27 u + 5) z^2 & = & - 2^5 x y \\[5pt]
Q_s \colon & (1215 u^2 - 190 u - 1) z^4 & = & 2^9 (x^4 + u y^4) .
\end{array}
\right.
\end{equation}
Let $C_h$ be the conic containing the $8$ hyperinflection lines and let $C_s$ be the conic containing the $8$ simple inflection lines.  Every automorphism of $\mathbb{P}^2$ stabilizing the configuration of inflection lines must stabilize individually the two conics $C_h$ and $C_s$, and the pencil generated by the conics $C_h$ and $C_s$ is the pencil $\mathcal Q$ generated by $xy=0$ and $z^2=0$.

The group preserving the conics $C_h$ and $C_s$ is the group $\overline{D}$ of~\eqref{Dbar}.
To determine the automorphism group of the configuration it is therefore enough to check which of the elements of $\overline{D}$ preserve the configuration; since the anti-diagonal element $\sigma_s$ preserves the configuration, we only need to determine the diagonal automorphisms.  Note that the diagonal elements of $\overline{D}$ preserve the monomials $xy$, $z^2$, $x^4$, $y^4$, $z^4$ up to scaling; in particular, if an element of $\overline{D}$ preserves the scheme defined by the equations~\eqref{vergii} or~\eqref{vergis}, it must preserve the individual equations, up to scaling, since every non-zero multiple of the equations of $C_h$ or $C_s$ involves a monomial divisible by $xy$. A simple computation shows that the equations $Q_h$ and $Q_s$ are preserved up to scaling by the diagonal element $\rho_\zeta$ of $\overline{D}$ if and only if 
\[
\zeta ^4=1 \quad \quad {\textup{or}} \quad \quad \zeta ^8-1=u+1=1215 u^2-190 u-1=0 .
\]
From the discussion above, we deduce that the automorphism group of the configuration consists of the diagonal elements satisfying these conditions, as well as the products of these elements with the element $\sigma_s$.  The first possibility gives rise to elements of the group $D_{4,u}$. The second possibility holds if and only if $\zeta ^8=-u=1$ and the number $1404=2^2 3^3 13$ vanishes. 
 We deduce that the group of linear automorphisms of the configuration of the inflection lines of $V_u$ coincides with the group $D_{4,u}$, with the exception of the curve $V_{-1}$ over a field of characteristic~$13$, as we always exclude fields of characteristic $2$ and $3$. Finally, in the two exceptional case, the automorphism group of the configuration of inflection lines is isomorphic to ${\rm D}_8$ and contains the linear transformation $[x, y, z] \to [y, x, z]$; since this transformation is not an automorphism of the curve $V_{-1}$, we conclude that the group of linear automorphism of $V_u$ is isomorphic to $D_{4,u}$ in all cases.

We now prove that the configurations of inflection lines of the quartics of the family $V_u$ are not conjugate by an element of $\Ps GL_3(k)$.
Suppose that $u,v$ are elements of $k$ and $T \colon \mathbb{P}^2 \to \mathbb{P}^2$ is a linear transformation such that the configurations of inflection lines of $V_u$ and $V_v$ are transformed to one another by $T$.  In particular, the conics $C_h(u)$ and $C_s(u)$ are transformed to the conics $C_h(v)$ and $C_s(v)$ by $T$.  Since the conic $C_h = C_h(u) = C_h(v)$ is independent of $u$ and since the conics $C_h , C_s(u) , C_s(v)$ are in the pencil spanned by $z^2$ and $xy$, it follows that the map $T$ stabilizes the conic $C_h$ and also the base points of the pencil.  Hence, the map $T$ is contained in the group $\overline{D}$.  Every element of $\overline{D}$ stabilized all the conics in the pencil, and therefore also the conic $C_s(u)$.  Thus the configurations that can be projectively equivalent are the ones for which the relation $27u+5 = 27v+5$ holds.  Hence the only possibility if that $u=v$ and we are done.

Finally, the configuration of inflection lines of $V$ is not conjugate to the configuration of any curve of the family $V_u$ because the hyperinflection lines of $V$ are not contained in a conic, while the ones of every quartic of the family $V_u$ are.
\qed 

\begin{Rem} \label{eccezioni}
Let $k$ be a field of characteristic $3$ or $13$; the two curves with equations 
\begin{equation} \label{ec313}
- x^4 + y^4 - z^4 - 2 x^2 y^2 - 4 x y z^2 = 0 \quad \quad {\textup{and}}  \quad \quad 
x^4 - y^4 - z^4 - 2 x^2 y^2 - 4 x y z^2 = 0 
\end{equation}
have the same configuration of inflection lines.  These curves are projectively equivalent if the field $k$ contains a square root $i$ of~$-1$: the linear transformation $\rho_i$ changes the first equation into the second one.  The case of characteristic $13$ corresponds to the exception in the statement of Theorem~\ref{aut-iso}.

In the case of a field of characteristic $3$, the smooth curves in the pencil generated by the curves in equation~\eqref{ec313} are plane quartics having in pairs the same configuration of inflection lines.
\end{Rem}

We are ready to state our main result showing that it is possible to recover any smooth plane quartic over $\co$ with at least 8 hyperinflection points from its inflection lines.  The proof of this result is an immediate consequence of Theorem~\ref{aut-iso} and Vermeulen's classification over $\mathbb{C}$ of smooth plane quartics with at least $8$ hyperinflection points.

\begin{Cor} \label{main}
Let $C_1$ and $C_2$ be smooth plane quartic defined over $\co$ and assume that $C_1$ has at least 8 hyperinflection points. If the configuration of inflection lines of $C_1$ and $C_2$ coincide, then the curves $C_1$ and $C_2$ coincide. \qed
\end{Cor}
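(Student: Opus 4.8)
The plan is to reduce the Corollary to Theorem~\ref{aut-iso} by showing that \emph{both} curves $C_1$ and $C_2$ belong to Vermeulen's list~\eqref{verli}. Since $C_1$ is a smooth plane quartic over $\co$ with at least $8$ hyperinflection points, Vermeulen's classification immediately places $C_1$ in the list. The only point that requires work is to argue that $C_2$ lies in the list as well; once this is done, $C_1$ and $C_2$ are two quartics in Vermeulen's list with the same configuration of inflection lines, and Theorem~\ref{aut-iso} applies.

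The intermediate claim I would establish is that the number of hyperinflection points of a smooth plane quartic can be read off from its configuration of inflection lines. The underlying observation is that the Gauss map is injective on the set of inflection points: a simple inflection line meets the quartic with multiplicity $3$ at its inflection point and transversally at one further point, whereas a hyperinflection line meets it with multiplicity $4$ at a single point; in neither case can such a line be tangent at a second point, so distinct inflection points yield distinct inflection lines. Taking into account the usual multiplicity count (in characteristic $0$ the inflection divisor cut out by the Hessian has degree $24$, an ordinary flex contributing $1$ and a hyperflex contributing $2$), if $C$ has $h$ hyperinflection points then it has $24-2h$ simple ones, its $24-h$ inflection lines are pairwise distinct, and in the cycle $\F(C)\in\Sym(\bpd)$ exactly the $h$ hyperinflection lines appear with multiplicity $2$. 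Hence $h$ equals $24$ minus the number of distinct inflection lines, equivalently the number of points of multiplicity $2$ in $\F(C)$; in either reading, $h$ is determined by the configuration of inflection lines.

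Applying this to the hypothesis that $C_1$ and $C_2$ share the same configuration of inflection lines, I conclude that $C_2$ has the same number of hyperinflection points as $C_1$, hence at least $8$. By Vermeulen's classification, $C_2$ also belongs to Vermeulen's list. Now both $C_1$ and $C_2$ lie in the list and have the same configuration of inflection lines, so Theorem~\ref{aut-iso} applies directly. Because we work over $\co$, the exceptional configurations of Theorem~\ref{aut-iso} — which occur only in characteristic $13$ — do not arise, and we obtain $C_1=C_2$.

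Beyond the two cited results, the only genuinely new ingredient is the bookkeeping in the second paragraph, and this is also where the single real obstacle lies: one must be certain that hyperinflection points are faithfully detected by the configuration of inflection lines, i.e. that no two inflection points accidentally share an inflection line and that a hyperflex contributes multiplicity exactly $2$. Both facts follow from the elementary intersection-multiplicity count above, so the argument is indeed immediate once Theorem~\ref{aut-iso} and Vermeulen's classification are granted.
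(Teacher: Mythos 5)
Your proof is correct and follows the same route as the paper, which simply declares the corollary an immediate consequence of Theorem~\ref{aut-iso} and Vermeulen's classification over $\co$. The one step you spell out --- that the hyperflex count of $C_2$ can be read off from the configuration, because distinct inflection points of a smooth quartic have distinct inflection lines and a hyperflex line carries multiplicity $2$ in the cycle --- is exactly the detail the paper leaves implicit, and your intersection-multiplicity argument for it is sound.
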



\end{document}